\documentclass[12pt,a4paper]{article}
\usepackage{amsmath,amssymb,amsthm,mathtools}
\usepackage[margin=1.15in]{geometry}
\usepackage{tgtermes}
\usepackage{xcolor}
\usepackage[colorlinks=true,linkcolor=green!35!black,citecolor=green!35!black,
            urlcolor=green!35!black]{hyperref}
\usepackage{setspace}

\newtheorem{theorem}{Theorem}[section]

\newtheorem{lemma}[theorem]{Lemma}

\theoremstyle{remark}\newtheorem{remark}[theorem]{Remark}

\DeclareMathOperator{\tr}{tr}
\DeclareMathOperator{\Cov}{Cov}
\newcommand{\R}{\mathbb{R}}\newcommand{\E}{\mathbb{E}}
\newcommand{\ip}[2]{\langle #1,#2\rangle}

\newcommand{\va}{\mathbf{a}}\newcommand{\vb}{\mathbf{b}}
\newcommand{\vu}{\mathbf{u}}\newcommand{\vx}{\mathbf{x}}
\newcommand{\vV}{\mathbf{V}}\newcommand{\vX}{\mathbf{X}}
\newcommand{\vY}{\mathbf{Y}}

\title{Two short proofs of incompatibility\\ for correlation matrices}
\author{Cedric Phillips\thanks{cedric.phillips@mail.polimi.it}}
\date{\vspace{-5ex}}

\begin{document}
\maketitle

\begin{abstract}
Let $P_d$ be the elliptope of $d\times d$ correlation matrices, and, for a
standardised law $F$, let $S_d^F$ be the set of correlation matrices of random
vectors with all margins $F$. We give two short proofs. First, an eleven-term
representation of $|\vx|^4$ as a sum of fourth powers of linear forms on $\R^4$ yields
$S_{11}\neq P_{11}$ for uniform margins. Second, the six diagonals of the regular icosahedron yield $S_6^F\neq P_6$ for
arcsine margins. Together with a result of Devroye and Letac, this gives
$S_d^F = P_d$ if and only if $d\le5$, settling a conjecture of theirs.
\end{abstract}

\section{Introduction}

For a random vector with continuous margins, Spearman's rho of a pair is the Pearson
correlation of the probability transforms. Write
\[
  P_d = \{\text{symmetric positive semidefinite }d\times d\text{ matrices with unit
  diagonal}\}
\]
for the elliptope, that is, the set of $d\times d$ correlation matrices, and $S_d$
for the set of matrices of pairwise Spearman coefficients. Embrechts, McNeil and
Straumann \cite{EMS2002} asked whether $S_d = P_d$. Devroye and Letac \cite{DL2015} proved equality for $d\le9$, and Wang, Wang and Wang \cite{WWW2019} proved $S_d\neq P_d$ for $d\ge12$ by exhibiting twelve
unit vectors in $\R^4$ satisfying an exact quartic identity.

While this work was in preparation, Wang and Zhang \cite{WZ2026} settled the
question completely, showing that $S_d = P_d$ if and only if $d\le9$ by constructing an explicit counterexample in dimension 10. Their work subsumes Theorem~\ref{thm:11} below, which we obtained independently, at about the same time (early August 2026). We exhibit our proof in dimension 11 because it is short, simple (using only a polynomial identity from the literature), and follows the idea set out by Wang,
Wang and Wang \cite{WWW2019} in their proof for $d \geq 12$. 

The second proof concerns a different margin. Its conclusion is new.
Call a law (probability distribution) $F$ on $\R$ standardised if it has mean $0$ and variance $1$, and
for such an $F$ write $S_d^F$ for the set of correlation matrices of $d$-dimensional
random vectors whose margins are all $F$. The Spearman matrix of a random
vector with continuous margins is the correlation matrix of its probability
transforms (rescaled to be standardised). Therefore, one has $S_d = S_d^u$ with $u$ the uniform
law on $[-\sqrt3,\sqrt3]$. In the same paper in which they proved $S_d = P_d$ for
$d\le9$, Devroye and Letac \cite[\S5]{DL2015} conjectured that the analogous
statement for the arcsine law (that is, the $\mathrm{Beta}(\frac{1}{2}, \frac{1}{2})$ distribution) fails when $d \geq 6$. 
Theorem~\ref{thm:arcsine} gives the exact threshold when combined with their result for $d \leq 5$.

\section{Reducing the problem}\label{sec:red}

Both proofs use the same idea, due to \cite{WWW2019} in the uniform case. As the following lemma only requires that the margins share one standardised law, we can work generally and apply it to both problems (we also give a different argument from the original). Recall that $A \in \R^{d\times k}$
of rank $k$ with $R = AA^{\!\top}$ is called a rank decomposition of
$R \in P_d$. The rows $\va_1^{\!\top},\dots,\va_d^{\!\top}$ of such an $A$ are unit vectors, since the $j$-th diagonal entry of $R$ is $|\va_j|^2$.

\begin{lemma}[{after \cite[Thm.~2.2]{WWW2019}}]\label{lem:www}
Let $F$ be standardised, let $R \in P_d$ have rank $k$, and let $A$ be a rank
decomposition of $R$. Then $R \in S_d^F$ if and only if there is a random vector $\vV$
in $\R^k$ with $\E \vV = 0$, $\Cov \vV = I_k$ and $\ip{\va_j}{\vV}\sim F$ for every
$j = 1,\dots,d$.
\end{lemma}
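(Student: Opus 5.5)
The two directions are of very different difficulty, so I will treat them separately. The key tool throughout is the linear-algebra fact that a random vector $\vX$ with $\Cov\vX = R = AA^{\!\top}$ must take values almost surely in the column space of $A$.

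\emph{Sufficiency.} Suppose $\vV$ is given, and define $\vX = A\vV$, so that $X_j = \ip{\va_j}{\vV}\sim F$ for every $j$. Then
\[
  \Cov\vX = A\,\Cov(\vV)\,A^{\!\top} = AA^{\!\top} = R .
\]
Each $X_j$ has law $F$, so it has variance $1$, and hence $\Cov\vX$ equals the correlation matrix of $\vX$. It follows that $R\in S_d^F$.

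\emph{Necessity.} Let $\vX$ have all margins $F$ and correlation matrix $R$. Since $F$ is standardised, $\E\vX = 0$ and $\Cov\vX = R$.

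\begin{itemize}
\item \textbf{Support.} For any $\vu\in\ker R = \ker A^{\!\top}$ we have $\operatorname{Var}\ip{\vu}{\vX} = \vu^{\!\top}R\vu = 0$. Because $\ip{\vu}{\vX}$ has mean zero, $\ip{\vu}{\vX}=0$ almost surely. Taking $\vu$ over a basis of $\ker A^{\!\top}$ gives $\vX\in(\ker A^{\!\top})^\perp = \operatorname{col}(A)$ almost surely.
\item \textbf{Lifting.} $A$ has rank $k$, so it is injective with left inverse $A^+ = (A^{\!\top}A)^{-1}A^{\!\top}$. Set $\vV = A^+\vX$. On the almost-sure event $\vX\in\operatorname{col}(A)$ we get $A\vV = AA^+\vX = \vX$, because $AA^+$ is the orthogonal projection onto $\operatorname{col}(A)$. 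Hence $\ip{\va_j}{\vV} = X_j\sim F$ almost surely.
\item \textbf{Moments.} $\E\vV = A^+\E\vX = 0$, and
\[
  \Cov\vV = A^+AA^{\!\top}(A^+)^{\!\top} = (A^+A)(A^+A)^{\!\top} = I_k .
\]
\end{itemize}

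The only step that needs care is the support claim in the necessity direction, namely that $\vX$ lies almost surely in $\operatorname{col}(A)$, so that $\vV=A^+\vX$ genuinely recovers $\vX$. Without it we would only know $A\vV = P\vX$ for the projection $P = AA^+$, and the margins of $P\vX$ need not be $F$. The zero-variance argument in the first bullet settles this.
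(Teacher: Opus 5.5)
Your proof is correct and is essentially the paper's argument: the same lift $\vV = A^{+}\vX$, the same covariance computation $(A^{+}A)(A^{+}A)^{\!\top} = I_k$, and the same key point that the part of $\vX$ orthogonal to the column space of $A$ vanishes almost surely. The only difference is cosmetic: the paper gets this in one line from $\E|(I-P)\vX|^2 = \tr\bigl((I-P)R\bigr) = 0$, whereas you show that $\ip{\vu}{\vX}$ has zero variance for each $\vu$ in a basis of $\ker A^{\!\top}$.
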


\begin{proof}
Suppose first that such a $\vV$ exists, and let $\vX = A\vV$, the random vector in $\R^d$
with coordinates $X_j = \ip{\va_j}{\vV}$. By assumption, each $X_j$ has law $F$, so $\vX$ has
all its margins equal to $F$; in particular, each $X_j$ has mean $0$ and variance $1$,
so the correlation matrix of $\vX$ is simply $\E[\vX\vX^{\!\top}]$. Since $\E \vV = 0$, we have
$\E[\vV\vV^{\!\top}] = \Cov \vV = I_k$ (the $k \times k$ identity matrix), and therefore
$\E[\vX\vX^{\!\top}] = A \, \E[\vV\vV^{\!\top}]A^{\!\top} = AA^{\!\top} = R$. Thus $R \in S_d^F$.

Conversely, suppose $R \in S_d^F$, and let $\vY$ be a random vector with margins $F$ and
correlation matrix $R$. Since $F$ is standardised, $\E \vY = 0$ and $\E[\vY\vY^{\!\top}] = R$.
The matrix $A$ has full column rank, so $A^{\!\top}A$ is invertible, and
$A^{+} = (A^{\!\top}A)^{-1}A^{\!\top}$ (the Moore--Penrose inverse of $A$) satisfies
$A^{+}A = I_k$. The matrix $P = AA^{+}$ is the orthogonal projection of $\R^d$ onto the
column space of $A$: it is symmetric, $P^2 = P$, and $PA = A$. Set $\vV = A^{+}\vY$. Then
$\E \vV = 0$ and
\[
  \Cov \vV\; = \; A^{+}R \, (A^{+})^{\!\top} \; = \; (A^{+}A)(A^{+}A)^{\!\top} \; = \; I_k .
\]
Moreover $\vY-A\vV = (I-P)\vY$, and $(I-P)^{\!\top}(I-P) = I-P$, so
\[
  \E\bigl|\vY-A\vV\bigr|^2 \; = \; \E\bigl[\vY^{\!\top}(I-P)\vY\bigr] \; = \; \tr\bigl((I-P)R\bigr)
   \; = \; \tr\bigl(((I-P)A)A^{\!\top}\bigr) \; = \; 0 ,
\]
because $(I-P)A = 0$. Hence $\vY = A\vV$ a.s., and $\ip{\va_j}{\vV} = Y_j$ has law $F$ for
every $j$.
\end{proof}

\begin{lemma}\label{lem:pad}
Let $F$ be standardised. If $M \in P_m\setminus S_m^F$ and $d\ge m$, then
$M\oplus I_{d-m} \in P_d\setminus S_d^F$.
\end{lemma}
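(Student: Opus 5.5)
The proof splits into two memberships. First, $M\oplus I_{d-m}\in P_d$. Second, and this carries the content, $M\oplus I_{d-m}\notin S_d^F$, which I will get by contradiction through marginalisation.

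For the first part, $M\oplus I_{d-m}$ is block diagonal with symmetric positive semidefinite blocks and unit diagonal, so it is symmetric positive semidefinite with unit diagonal.

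For the second part, suppose for contradiction that $M\oplus I_{d-m}\in S_d^F$. Then there is a random vector $\vY=(Y_1,\dots,Y_d)$ with every margin equal to $F$ and correlation matrix $M\oplus I_{d-m}$. Consider the subvector $\vY'=(Y_1,\dots,Y_m)$. Its margins are still all $F$. Its correlation matrix is the leading $m\times m$ principal submatrix of the correlation matrix of $\vY$, because each correlation $\E[Y_iY_j]$ involves only the pair $(Y_i,Y_j)$. That submatrix is exactly $M$. Hence $M\in S_m^F$, contradicting the hypothesis $M\notin S_m^F$. Lemma~\ref{lem:www} is not needed here; only the definition of $S_d^F$ is used.

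No step is a genuine obstacle. The one point to state explicitly is that taking a subvector preserves both the common marginal law $F$ and the corresponding correlation entries. This follows directly from the definitions, since $F$ is standardised and all second moments are finite.
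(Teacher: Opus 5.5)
Your proof is correct and is essentially the paper's argument. As in the paper, the block-diagonal structure gives membership in $P_d$, and restricting a hypothetical vector to its first $m$ coordinates yields margins $F$ with correlation matrix $M$, contradicting $M\notin S_m^F$.
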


\begin{proof}
The matrix $M\oplus I_{d-m}$ lies in $P_d$. If it were the correlation matrix of some
$\vX$ with margins $F$, then $(X_1,\dots,X_m)$ would have margins $F$ and correlation
matrix $M$, which would contradict $M\notin S_m^F$.
\end{proof}

\section{Uniform margins: dimension 11}\label{sec:11}

Let $u$ denote the uniform law on $[-\sqrt3,\sqrt3]$, which has mean $0$, variance
$1$ and $\E[u^4] = (\sqrt3)^4/5 = 9/5$. A random vector has Spearman matrix $R$ exactly
when it has margins $u$ and correlation matrix $R$, so $S_d = S_d^u$.

Equation (9.27)(i) of Reznick \cite{Reznick1992} reads
\begin{equation}\label{eq:reznick}
\begin{aligned}
  192 \, |\vx|^4 \; = \; &6 \, (x_1+x_2+x_3+x_4)^4+\Sigma_4 \, (3x_1-x_2-x_3-x_4)^4\\
  &+\Sigma_6\bigl((1{+}\sqrt2)x_1+(1{+}\sqrt2)x_2+(1{-}\sqrt2)x_3+(1{-}\sqrt2)x_4\bigr)^4,
\end{aligned}
\end{equation}
where $\Sigma_4$ is the sum of the four terms obtained by permuting the variables in
$3x_1-x_2-x_3-x_4$, and $\Sigma_6$ the sum of the six terms obtained by choosing
which two variables carry the coefficient $1+\sqrt2$. There are therefore $1+4+6 = 11$ terms.

\begin{theorem}\label{thm:11}
$S_d\neq P_d$ for every $d\ge11$.
\end{theorem}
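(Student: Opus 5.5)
By Lemma~\ref{lem:pad} with $F=u$, it suffices to exhibit one matrix $M\in P_{11}\setminus S_{11}^u$. The plan is to read off $M$ from the eleven linear forms in \eqref{eq:reznick}. Write the $i$-th form as $\ip{\vb_i}{\vx}$ and the identity as $192|\vx|^4=\sum_{i=1}^{11}c_i\ip{\vb_i}{\vx}^4$, with $c_1=6$ and $c_i=1$ otherwise. Put $\va_i=\vb_i/|\vb_i|$ and $\lambda_i=c_i|\vb_i|^4/192>0$, so that
\[
|\vx|^4=\sum_{i=1}^{11}\lambda_i\ip{\va_i}{\vx}^4 .
\]
Let $A$ be the $11\times4$ matrix with rows $\va_i^{\!\top}$. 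The vectors $\va_i$ span $\R^4$, since otherwise the right side would vanish on some nonzero $\vx$. Hence $A$ has rank $4$ and is a rank decomposition of $M=AA^{\!\top}\in P_{11}$.

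Suppose, for contradiction, that $M\in S_{11}^u$. Lemma~\ref{lem:www} then gives $\vV$ in $\R^4$ with $\E\vV=0$, $\Cov\vV=I_4$ and $\ip{\va_i}{\vV}\sim u$ for all $i$. The idea is to compute $\E|\vV|^4$ in two ways.

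\emph{From the identity.} Taking expectations gives
\[
\E|\vV|^4=\sum_i\lambda_i\E\ip{\va_i}{\vV}^4=\tfrac95\sum_i\lambda_i .
\]
To evaluate $\sum_i\lambda_i$, integrate the identity over the unit sphere, or equivalently take $\vx$ standard Gaussian in $\R^4$. This gives $\E|\vx|^4=d(d+2)=24$ on the left and $3\sum_i\lambda_i$ on the right, so $\sum_i\lambda_i=8$. Therefore $\E|\vV|^4=72/5$.

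\emph{A competing bound.} For this I would again use the identity, now with a Gaussian comparison. For any unit vector $\va$ put $\vY=\ip{\va}{\vV}$. Kurtosis considerations alone do not immediately contradict $72/5$, so I would look for something sharper. The point is that \emph{every} direction gives a marginal with fourth moment $9/5$ only for the eleven $\va_i$; the remaining freedom lies in the fourth-moment tensor $T=\E\vV^{\otimes4}$. The identity says the quartic $q(\vx)=\E\ip{\vx}{\vV}^4$ satisfies $\sum_i\lambda_i q(\va_i)=\langle |\cdot|^4\text{ paired with }T\rangle=\E|\vV|^4$. This is consistent, not contradictory, so the fourth moment alone will not finish the argument.

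\emph{The extra ingredient.} One must therefore use the full uniformity of the marginals, not just $\E u^4$. Following \cite{WWW2019}, I would use higher even moments or the characteristic function. Applying the identity to $\vx=\vV$ gives the almost-sure equality $|\vV|^4=\sum_i\lambda_i\ip{\va_i}{\vV}^4$. Since $|\ip{\va_i}{\vV}|\le\sqrt3$ almost surely, this yields $|\vV|^4\le 9\sum_i\lambda_i=72$, so $|\vV|\le 72^{1/4}$ almost surely. I would then aim for a contradiction by evaluating $\E|\vV|^4$ against $\E|\vV|^2=4$ using the pointwise equality. Squaring the identity and integrating involves $\E\ip{\va_i}{\vV}^4\ip{\va_j}{\vV}^4$ cross terms, which is unpleasant. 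A cleaner route is the observation that $|\vV|^4=\sum_i\lambda_iY_i^4$ with $Y_i\sim u$, combined with Jensen: $\E|\vV|^4\ge(\E|\vV|^2)^2=16$. But we computed $\E|\vV|^4=72/5<16$, which is a contradiction.

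This last comparison, $72/5<16$, is the step I expect to be the crux. It relies on the uniform fourth moment $9/5$ being smaller than the Gaussian value $3$ by enough. Verifying the constants carefully (the normalisation giving $\sum_i\lambda_i=8$, hence $\E|\vV|^4=\tfrac95\cdot8$) is the main thing to check. Once it holds, the earlier detour through tensors is unnecessary and the proof is just: the identity, expectation, Jensen, contradiction, then Lemma~\ref{lem:pad}.
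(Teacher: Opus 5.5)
Your final argument (Lemma~\ref{lem:www}, expectation of the identity with $\E u^4 = 9/5$ giving $\E|\vV|^4 = 72/5$, Jensen against $(\E|\vV|^2)^2 = 16$, then Lemma~\ref{lem:pad}) is exactly the paper's proof. The one variation is that you get $\sum_i\lambda_i = 8$ by Gaussian averaging, which shows this constant is forced for any such representation on $\R^4$, instead of computing it from $|\vb_1|^2 = 4$ and $|\vb_j|^2 = 12$. Delete the middle detour: its claims that the fourth moment alone cannot finish the argument and that the full uniformity of the margins is needed are false, as your own closing step shows.
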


\begin{proof}
Write $\vb_1,\dots,\vb_{11}$ for the coefficient vectors of the eleven linear forms in
\eqref{eq:reznick}, with multiplicities $c_1 = 6$ and $c_2 = \dots = c_{11} = 1$. Put
$\va_j = \vb_j/|\vb_j|$. The first vector has $|\vb_1|^2 = 4$ and the rest have
$|\vb_j|^2 = 12$, so \eqref{eq:reznick} reads
$192|\vx|^4 = \sum_jc_j|\vb_j|^4\ip{\va_j}{\vx}^4$, with
\[
  \sum_{j = 1}^{11}c_j|\vb_j|^4 \; = \; 6\cdot4^2+10\cdot12^2 \; = \; 1536 .
\]
Dividing by $1536$ gives us
\begin{equation}\label{eq:cub11}
  \sum_{j = 1}^{11}w_j \, \ip{\va_j}{\vx}^4 \; = \; \tfrac18|\vx|^4\qquad(\vx \in \R^4),
  \qquad w_1 = \tfrac1{16},\quad w_2 = \dots = w_{11} = \tfrac3{32},
\end{equation}
so that $\sum_jw_j = \tfrac1{16}+10\cdot\tfrac3{32} = 1$.

Let $A$ be the $11\times4$ matrix with rows $\va_1^{\!\top},\dots,\va_{11}^{\!\top}$, and
let $M = AA^{\!\top}$, the matrix with entries $M_{ij} = \ip{\va_i}{\va_j}$. Then $M$ is
symmetric and positive semidefinite with unit diagonal, so $M \in P_{11}$. The $\va_j$
must span $\R^4$ (otherwise they would all be orthogonal to some $\vb\neq 0$, and
\eqref{eq:reznick} at $\vx = \vb$ would give $0 = 192|\vb|^4$). Hence $A$ has rank $4$, so does
$M = AA^{\!\top}$, and $A$ is a rank decomposition of $M$.

Suppose $M \in S_{11}$, and let $\vV$ be as in Lemma~\ref{lem:www}, so that
$\ip{\va_j}{\vV}\sim u$ for every $j$. Substituting $\vx = \vV$ in \eqref{eq:cub11} and taking
expectations, using $\E[\ip{\va_j}{\vV}^4] = \E[u^4] = 9/5$,
\[
  \tfrac18 \, \E|\vV|^4 \; = \; \sum_{j = 1}^{11}w_j \, \E\bigl[\ip{\va_j}{\vV}^4\bigr]
   \; = \; \tfrac95\sum_{j = 1}^{11}w_j \; = \; \tfrac95 ,
  \qquad\text{so}\qquad \E|\vV|^4 = \tfrac{72}{5} = 14.4 .
\]
But $\E|\vV|^2 = \tr\Cov \vV = 4$, so Jensen's inequality gives
$\E|\vV|^4\ge(\E|\vV|^2)^2 = 16$, a contradiction. Hence
$M \in P_{11}\setminus S_{11}$. We extend this to all $d \ge 11$ using Lemma~\ref{lem:pad}.
\end{proof}

\begin{remark}\label{rem:sharp}
Eleven is the least number of terms for which an identity like
\eqref{eq:cub11} can hold (with nonnegative weights). Such an identity with $n$ terms is exactly a representation of $|\vx|^4$ as a
sum of $n$ fourth powers of real linear forms on $\R^4$. By
\cite[Prop.~9.26]{Reznick1992}, the least such $n$ is $\binom52+1 = 11$; the borderline
case $n = \binom52 = 10$ is excluded, per \cite[Prop.~9.2(ii)]{Reznick1992}. Such a representation would force the ten lines $\R \va_j$ to be equiangular, that is, to meet pairwise at one common angle, but $\R^4$ contains at most six equiangular lines \cite{LS1973}. Therefore, a new argument is needed for dimension 10, like the one Wang and Zhang \cite{WZ2026} provided.
\end{remark}

\section{Arcsine margins: dimension 6}\label{sec:6}

Let $F$ denote the standardised arcsine law, that is, the law of $\sqrt2\cos\Theta$
with $\Theta$ uniform on $[0,2\pi)$; equivalently the standardised
$\mathrm{Beta}(\tfrac12,\tfrac12)$ law on $[-\sqrt2,\sqrt2]$. It has mean $0$,
variance $2 \, \E[\cos^2\Theta] = 1$, and
\[
  \E[F^4] \; = \; 4 \, \E[\cos^4\Theta] \; = \; 4\cdot\tfrac38 \; = \; \tfrac32 .
\]

\begin{theorem}\label{thm:arcsine}
For the standardised arcsine law $F$,
\[
  S_d^F = P_d\qquad\Longleftrightarrow\qquad d\le5 .
\]
\end{theorem}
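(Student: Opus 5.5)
The case $d\le5$ is the theorem of Devroye and Letac \cite{DL2015}, so only the direction ``$d\ge6\Rightarrow S_d^F\neq P_d$'' needs proof. By Lemma~\ref{lem:pad} it suffices to treat $d=6$: exhibit one $M\in P_6\setminus S_6^F$ and pad it with an identity block. I will mimic the proof of Theorem~\ref{thm:11}, replacing Reznick's identity by a quartic identity coming from the icosahedron in $\R^3$.

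\textbf{The quartic identity.} Let $\varphi=(1+\sqrt5)/2$. Take one vertex from each antipodal pair of the regular icosahedron, $(0,1,\pm\varphi)$, $(1,\pm\varphi,0)$, $(\pm\varphi,0,1)$, and normalise them to unit vectors $\va_1,\dots,\va_6$; note $|\cdot|^2=1+\varphi^2=\varphi+2$. The claim is
\begin{equation}\label{eq:ico}
  \sum_{j=1}^6\ip{\va_j}{\vx}^4 \;=\; \tfrac65\,|\vx|^4\qquad(\vx\in\R^3).
\end{equation}
The left side is invariant under the rotation group of the icosahedron. The invariant polynomials of that group have degrees $2,6,10$, so the only invariant quartics are multiples of $|\vx|^4$. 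Hence \eqref{eq:ico} holds with some constant $c$. To find $c$, evaluate at $\vx=e_1$. This gives $(2+2\varphi^4)/(\varphi+2)^2$. Using $\varphi^4=3\varphi+2$ and $(\varphi+2)^2=5\varphi+5$, this equals $(6\varphi+6)/(5\varphi+5)=6/5$. If I want to avoid invariant theory, I can check the monomials $x_1^2x_2^2$ and odd-type terms directly; the cyclic symmetry of the vertex list reduces this to one or two coefficients. I expect this verification to be the main (though routine) obstacle.

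\textbf{The contradiction.} The $\va_j$ span $\R^3$, since \eqref{eq:ico} would otherwise fail at a vector orthogonal to all of them. So $A$ (rows $\va_j^{\!\top}$) is a rank decomposition of $M=AA^{\!\top}\in P_6$, whose off-diagonal entries are $\pm1/\sqrt5$. Suppose $M\in S_6^F$. Lemma~\ref{lem:www} gives $\vV$ in $\R^3$ with $\E\vV=0$, $\Cov\vV=I_3$ and $\ip{\va_j}{\vV}\sim F$. Substituting $\vx=\vV$ into \eqref{eq:ico}, taking expectations and using $\E[F^4]=3/2$ gives
\[
\tfrac65\,\E|\vV|^4 = 6\cdot\tfrac32 = 9,\qquad\text{so}\qquad \E|\vV|^4=\tfrac{15}{2}.
\]
But Jensen's inequality gives $\E|\vV|^4\ge(\E|\vV|^2)^2=(\tr I_3)^2=9$, a contradiction. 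Hence $M\in P_6\setminus S_6^F$, and Lemma~\ref{lem:pad} yields $S_d^F\neq P_d$ for all $d\ge6$.
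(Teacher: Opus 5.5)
Your proposal is correct and is essentially the paper's proof. It uses the same six icosahedral lines (your representatives differ from the paper's $\vu_j$ only by sign, which leaves every $\ip{\va_j}{\vx}^4$ unchanged), the same identity $\sum_j\ip{\va_j}{\vx}^4=\tfrac65|\vx|^4$, the same contradiction $\tfrac{15}{2}<9$ via Lemma~\ref{lem:www} and Jensen, and padding via Lemma~\ref{lem:pad}.

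The only difference is how you verify the identity. You use icosahedral invariance and then fix the constant at $e_1$. This is valid: the sum is even, hence invariant under the full symmetry group, whose basic invariants have degrees $2,6,10$; the rotation group alone also has a degree-$15$ invariant, but that is irrelevant in degree $4$. The paper instead does the ``routine'' direct check you anticipated, pairing the vectors and using $(p+q)^4+(p-q)^4=2(p^4+6p^2q^2+q^4)$.
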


\begin{proof}
($\Leftarrow$) This is a result of Devroye and Letac \cite[\S4]{DL2015}: for every
$k\ge\tfrac12$ and every $d\le5$, each $d\times d$ correlation matrix is the correlation
matrix of a random vector whose margins are all $\mathrm{Beta}(k,k)$. The arcsine law is
the case $k = \tfrac12$. We can standardise using the map map $t\mapsto\sqrt2 \, (2t-1)$ and change no correlation (since it is affine). Then $P_d\subseteq S_d^F$ for $d\le5$. Since the opposite
inclusion $S_d^F\subseteq P_d$ holds for any $F$,  $S_d^F = P_d$.

($\Rightarrow$) We show that $S_6^F\neq P_6$; by Lemma~\ref{lem:pad} it follows that
$S_d^F\neq P_d$ for every $d\ge6$. Let
$ \varphi = \tfrac{1+\sqrt5}{2}$, so that $ \varphi^2 = \varphi+1$. Consider the six
vectors
\[
\begin{aligned}
  \vu_1 & = (0,1, \varphi), & \vu_3 & = (\varphi,0,1), & \vu_5 & = (1, \varphi,0),\\
  \vu_2 & = (0,-1, \varphi), & \vu_4 & = (\varphi,0,-1), & \vu_6 & = (-1, \varphi,0),
\end{aligned}
\]
that is, one vertex from each pair of opposite vertices of a regular icosahedron. Using
$(p+q)^4+(p-q)^4 = 2 \, (p^4+6p^2q^2+q^4)$ on the pairs $\{\vu_1,\vu_2\}$,
$\{\vu_3,\vu_4\}$ and $\{\vu_5,\vu_6\}$,
\[
\begin{aligned}
  \ip{\vu_1}{\vx}^4+\ip{\vu_2}{\vx}^4& = 2 \, (\varphi^4x_3^4+6 \varphi^2 x_2^2 x_3^2+x_2^4),\\
  \ip{\vu_3}{\vx}^4+\ip{\vu_4}{\vx}^4& = 2 \, (\varphi^4x_1^4+6 \varphi^2 x_1^2 x_3^2 +x_3^4),\\
  \ip{\vu_5}{\vx}^4+\ip{\vu_6}{\vx}^4& = 2 \, (\varphi^4x_2^4+6 \varphi^2 x_1^2 x_2^2 +x_1^4).
\end{aligned}
\]
Summing up, the coefficient of each $x_i^4$ is $2(1 +  \varphi^4)$, and that of each
$x_i^2 x_j^2 $ is $12 \varphi^2 $. Since $\varphi^4 = (\varphi + 1)^2 = 3 \varphi+2$, we find the sum to be $6 \varphi^2 \bigl(\sum_ix_i^4 + 2\sum_{i<j}
x_i^2 x_j^2 \bigr) = 6 \varphi^2 |\vx|^4$. Each $\vu_j$ has $|\vu_j|^2 = 1 + \varphi^2 = \varphi+2$, and
$(\varphi+2)^2 = 5 \varphi^2 $. Therefore, the unit vectors $\va_j = \vu_j/|\vu_j|$
satisfy
\begin{equation}\label{eq:icos}
  \sum_{j = 1}^{6}\ip{\va_j}{\vx}^4 \; = \; \frac{6 \varphi^2 }{(\varphi + 2)^2 } \, |\vx|^4
   \; = \; \tfrac65 \, |\vx|^4\qquad(\vx \in \R^3).
\end{equation}

Let $A$ be the $6\times3$ matrix with rows $\va_1^{\!\top},\dots,\va_6^{\!\top}$, and let
$M = AA^{\!\top}$, the matrix with entries $M_{ij} = \ip{\va_i}{\va_j}$. Then $M$ is symmetric, positive semidefinite, and has unit diagonal, so $M \in P_6$. The $\va_j$ must span $\R^3$ (otherwise they would all be orthogonal to some $\vb\neq 0$, and \eqref{eq:icos} at $\vx = \vb$
would give $0 = \tfrac65|\vb|^4$). So $A$ has rank $3$, as does $M = AA^{\!\top}$, meaning that $A$
is a rank decomposition of $M$.

Suppose now that $M \in S_6^F$. Let $\vV$ be as in Lemma~\ref{lem:www}, so that
$\ip{\va_j}{\vV}\sim F$ for every $j$. Substituting $\vx = \vV$ in \eqref{eq:icos} and taking
expectations, using $\E[\ip{\va_j}{\vV}^4] = \E[F^4] = \tfrac32$,
\[
  \tfrac65 \, \E|\vV|^4 \; = \; \sum_{j = 1}^{6}\E\bigl[\ip{\va_j}{\vV}^4\bigr] \; = \; 6\cdot\tfrac32 \; = \; 9,
  \qquad\text{so}\qquad \E|\vV|^4 = \tfrac{15}{2} = 7.5 .
\]
But $\E|\vV|^2 = \tr\Cov \vV = 3$, so Jensen's inequality gives $\E|\vV|^4\ge(\E|\vV|^2)^2 = 9$, a
contradiction. Hence $M \in P_6 \setminus S_6^F$.
\end{proof}

This settles a conjecture of Devroye and Letac, who write that they ``conjecture
the existence of $R \in \mathcal R_6$ which cannot be the correlation of a
distribution whose margins are the arcsine distribution'' \cite[\S5]{DL2015} (their
$\mathcal R_6$ is our $P_6$).

\subsection*{Acknowledgements and disclosures}
Theorem~\ref{thm:11} was obtained independently of, and at approximately the same
time as, the work of Wang and Zhang \cite{WZ2026}, which settles the uniform case
completely. We are grateful for the clarity of their
construction in dimension 10, which cannot be obtained by the route taken here for dimension 11.

As in \cite{WZ2026}, a Large Language Model (LLM) was used in the production of this work. Specifically, the author suggested to the LLM that the idea of using equally-weighted vectors, as \cite{WWW2019} did, was an unnecessary restriction. The LLM was then able to pull Reznick's identity from the literature to provide a counterexample. The LLM also generated the new argument used to prove Lemma~\ref{lem:www}, and the configuration used for the forward implication in the proof of Theorem~\ref{thm:arcsine}. After consultation with the LLM, the work was written by a human author, who takes full responsibility for the contents.

\end{document}